\newtheorem{theorem}{Theorem}[section]
\newtheorem{lemma}[theorem]{Lemma}
\newtheorem{remark}[theorem]{Remark}
\title[on A GENERALIZED WIRTINGER INEQUALITY]
{on A GENERALIZED WIRTINGER INEQUALITY}
\author[Gisella Croce and Bernard Dacorogna]{}
\thanks{Research supported by Fonds National
Suisse (21-61390-00)} \email{gisella.croce@epfl.ch,
bernard.dacorogna@epfl.ch}
\subjclass{49R50, 26D10}
\begin{document}
\maketitle

\centerline{\scshape  Gisella Croce and Bernard Dacorogna}
\medskip

{\footnotesize \centerline{ D\'{e}partement de Math\'{e}matiques }
\centerline{ EPFL } \centerline{ 1015 Lausanne  CH } }
\medskip

\bigskip
\begin{quote}{\normalfont\fontsize{8}{10}\selectfont
{\bfseries Abstract.} Let
\[
\alpha\left(  p,q,r\right)  =\inf\left\{  \frac{\left\|
u^{\prime}\right\| _{p}}{\left\|  u\right\|  _{q}}:u\in
W_{per}^{1,p}\left(  -1,1\right)
\backslash\left\{  0\right\}  ,\;\int_{-1}^{1}\left|  u\right|  ^{r-2}%
u=0\right\}  .
\]
We show that%
\begin{align*}
\alpha\left(  p,q,r\right)   &  =\alpha\left(  p,q,q\right) \text{
if }q\leq
rp+r-1\\
\alpha\left(  p,q,r\right)   &  <\alpha\left(  p,q,q\right) \text{
if }q>\left(  2r-1\right)  p
\end{align*}
generalizing results of Dacorogna-Gangbo-Sub\'{\i}a and others.
\par}
\end{quote}

\vspace{0.3 cm}

\section{The main result}
In the present article we discuss the following minimization problem:%
\[
\alpha\left(  p,q,r\right)  =\inf\left\{  \frac{\left\|
u^{\prime}\right\| _{p}}{\left\|  u\right\|  _{q}}:u\in
W_{per}^{1,p}\left(  -1,1\right)
\backslash\left\{  0\right\}  ,\;\int_{-1}^{1}\left|  u\right|  ^{r-2}%
u=0\right\},
\]
where $p>1$,\ $q\geq r-1\geq1$ and%
\begin{align*}
\left\|  u\right\|  _{q} &  =\left(  \int_{-1}^{1}\left|  u\right|
^{q}\right)  ^{1/q},\\
W_{per}^{1,p}\left(  -1,1\right)   &  =\left\{  u:u\in
W^{1,p}\left( -1,1\right)  \text{ and }u\left(  -1\right) =u\left(
1\right)  \right\}  .
\end{align*}
We will denote by $p^{\prime}$\ the conjugate exponent of $p$\
(i.e. $\frac {1}{p}+\frac{1}{p^{\prime}}=1$) and the Beta function
\[
B\left(  p,q\right)  =\frac{\Gamma\left(  p\right)  \Gamma\left(
q\right) }{\Gamma\left(  p+q\right)  }=\int_{0}^{1}t^{p-1}\left(
1-t\right) ^{q-1}dt\,.
\]
Our main result will be

\begin{theorem}
\label{Theoreme principal}Let $p>1$,\ $q\geq r-1\geq1$; then%
\begin{align*}
\alpha\left(  p,q,r\right)   &  =\alpha\left(  p,q,q\right) \text{
if }q\leq
rp+r-1\\
\alpha\left(  p,q,r\right)   &  <\alpha\left(  p,q,q\right) \text{
if }q>\left(  2r-1\right)  p.
\end{align*}

Furthermore%
\[
\alpha\left(  p,q,q\right)  =2\left( \frac{1}{p^{\prime}}\right)
^{\frac {1}{q}}\left( \frac{1}{q}\right)
^{\frac{1}{p^{\prime}}}\left(  \frac {2}{p^{\prime}+q}\right)
^{\frac{1}{p}-\frac{1}{q}}B\left(  \frac
{1}{p^{\prime}},\frac{1}{q}\right)  .
\]
The above formula is also valid when $q=r>1$, $q=1$ ($p>1$ and
$r=2$) and $p=\infty$ ($q\geq r-1\geq1$).
\end{theorem}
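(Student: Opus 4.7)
The theorem combines an explicit formula for $\alpha(p,q,q)$, the equality $\alpha(p,q,r)=\alpha(p,q,q)$ for $q\le rp+r-1$, and the strict inequality $\alpha(p,q,r)<\alpha(p,q,q)$ for $q>(2r-1)p$. My plan is to prove these in sequence, obtain $\alpha(p,q,r)\le\alpha(p,q,q)$ as a free byproduct of the formula derivation, and handle the boundary cases $q=r$, $q=1$, $p=\infty$ by taking limits in the closed-form expression.

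\emph{Formula for $\alpha(p,q,q)$ and the upper bound.} Existence of a minimizer $u_0$ is standard: a normalized minimizing sequence is bounded in $W_{per}^{1,p}(-1,1)$, the constraint $\int|u|^{q-2}u=0$ passes to the limit under strong $L^q$ convergence, and $\|u'\|_p$ is weakly lower semicontinuous. A monotonicity/symmetrization argument for the Euler--Lagrange equation
\[
(|u'|^{p-2}u')' + \lambda|u|^{q-2}u = 0
\]
lets me take $u_0$ odd (after translation) and monotone on each half-period. Multiplying by $u'$ integrates once to $|u'|^p/p' = (\lambda/q)(M^q-|u|^q)$ with $M=\|u_0\|_\infty$; separating variables, substituting $|u|=Ms^{1/q}$, and enforcing period $2$ express the half-period and the norms $\|u_0'\|_p$, $\|u_0\|_q$ in terms of $B(1/p',1/q)$, and the ratio simplifies algebraically to the claimed closed form. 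Since $u_0$ is odd, $|u_0|^{r-2}u_0$ is odd for every $r$ and integrates to zero on $(-1,1)$; thus $u_0$ is admissible in $\alpha(p,q,r)$, giving $\alpha(p,q,r)\le\alpha(p,q,q)$.

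\emph{Equality when $q\le rp+r-1$.} Let $v$ minimize $\alpha(p,q,r)$. Its Euler--Lagrange equation is
\[
(|v'|^{p-2}v')' + \lambda|v|^{q-2}v + \mu|v|^{r-2} = 0,
\]
the factor $|v|^{r-2}$ arising from $\frac{d}{dv}(|v|^{r-2}v)=(r-1)|v|^{r-2}$. Testing against $v$ gives $\lambda=\|v'\|_p^p/\|v\|_q^q>0$, and testing against $1$ together with periodicity gives $\mu\int|v|^{r-2}=-\lambda\int|v|^{q-2}v$; since $\int|v|^{r-2}>0$ we have $\mu=0\iff\int|v|^{q-2}v=0$, in which case $v$ is admissible in $\alpha(p,q,q)$ and the reverse inequality follows. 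The heart of the argument is thus to exclude $\mu\ne 0$ in the stated range. From the first integral
\[
\frac{|v'|^p}{p'} + \frac{\lambda}{q}|v|^q + \frac{\mu}{r-1}|v|^{r-2}v = C,
\]
any $\mu\ne 0$ forces an amplitude asymmetry $M\ne m$ between the positive and negative extrema of $v$, and quantifying this asymmetry against the period-$2$ condition yields an algebraic constraint on $(p,q,r)$ that is satisfied precisely when $q>rp+r-1$, contradicting our hypothesis.

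\emph{Strict inequality and boundary cases.} When $q>(2r-1)p$, I would build an explicit asymmetric competitor suggested by the first integral: two monotone arcs of different amplitudes on half-periods of unequal length, with parameters tuned so that $v\in W_{per}^{1,p}$ and $\int|v|^{r-2}v=0$; direct computation of the ratio $\|v'\|_p/\|v\|_q$ shows it drops below $\alpha(p,q,q)$ exactly in this regime. The boundary cases $q=r>1$, $q=1$ with $r=2$, and $p=\infty$ follow by direct substitution or by taking the appropriate limit in the Beta function. The main obstacle is the equality step: the formula derivation and the competitor construction are fairly routine once the right ansatz is in hand, but pinning down the sharp threshold $q\le rp+r-1$ for the vanishing of $\mu$ requires a careful interplay between the first integral, the period condition, and the extrema $M$, $m$, and is precisely what makes the result sharp.
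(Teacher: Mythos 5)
Your outline follows the same broad strategy as the paper (Euler--Lagrange equation with a multiplier $\mu$, first integral, asymmetry of the extrema $M\neq m$ when $\mu\neq0$), and the reduction of the upper bound $\alpha(p,q,r)\le\alpha(p,q,q)$ via the odd minimizer of $\alpha(p,q,q)$ is correct. But the two steps that carry the entire content of the theorem are asserted rather than proved, and the way you state them suggests a misunderstanding of where the two different thresholds come from. For the equality range you write that quantifying the asymmetry ``yields an algebraic constraint on $(p,q,r)$ that is satisfied precisely when $q>rp+r-1$.'' No such clean dichotomy is available: the paper proves that asymmetric critical profiles are excluded when $q\le rp+r-1$ and that they exist when $q>(2r-1)p$, and explicitly leaves the intermediate range open (indeed the conjectured sharp threshold is $(2r-1)p$, not $rp+r-1$). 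The actual exclusion argument is the hardest part of the paper: after parametrizing the putative minimizer by $m=-\min u$ one must show that the constraint function $F(m)=\int_{-m}^{1}|z|^{r-2}z\,[1-r(m)+r(m)|z|^{r-2}z-|z|^q]^{-1/p}dz$ has no zero in $(0,1)$, which reduces to a pointwise inequality $h_m(t)\ge0$ and then to a chain of auxiliary functions $G$, $H(\alpha,m)$, $\widetilde H$, $\varphi$; the exponent $rp+r-1$ emerges only at the end of that chain. None of this is present in, or recoverable from, your sketch.

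The strict-inequality step has a second gap of the same kind. Producing an asymmetric admissible competitor requires showing $F$ vanishes at some $m_0<1$, which in the paper comes from the computation $F'(1)=\bigl(\tfrac1p-\tfrac{2r-1}{q}\bigr)B\bigl(\tfrac{2r-1}{q},\tfrac1{p'}\bigr)$ --- this is where $(2r-1)p$ enters, and you do not identify it. More importantly, the existence of an asymmetric competitor does not by itself imply its Rayleigh quotient is \emph{below} $\alpha(p,q,q)$; the paper needs a separate lemma showing $K'(m)\propto r'(m)\,(\cdots)\,F(m)$, whence $m=1$ is a local maximum of $K$ and the constrained minimum satisfies $K(m_0)<K(1)$. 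Your ``direct computation of the ratio shows it drops below'' is precisely the claim that needs proof. Finally, for the limiting cases $q=1$ and $p=\infty$, taking the limit in the Beta-function formula only identifies the candidate value $\bar\alpha$; one still needs a compactness argument (uniform bounds on $u_q$ and $u_q'$ from the integrated Euler--Lagrange equation, weak-$*$ convergence, lower semicontinuity) to show that $\alpha$ itself converges to $\bar\alpha$, as the paper does in Steps 3 and 4 of its proof.
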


\begin{remark}
(i) If the domain of integration is $\left(  a,b\right)  $\
instead of
$\left(  -1,1\right)  $\ the best constant becomes%
\[
\alpha_{a,b}\left(  p,q,r\right)  =\left(  \frac{2}{b-a}\right)
^{\frac {1}{p^{\prime}}+\frac{1}{q}}\alpha\left(  p,q,r\right)  .
\]

(ii) The case $p=q=2$\ is the classical Wirtinger inequality and
the constant
is then%
\[
\alpha\left(  2,2,2\right)  =\pi.
\]

(iii) The nonlinear case has first been investigated by
Dacorogna-Gangbo-Sub\'{\i}a \cite{Dacorogna-Gangbo-Subia} where
the cases $r=q$\ and $r=2$\ were considered. They computed the
actual value of $\alpha\left(  p,q,q\right)  $ and proved that
when $q\leq2p$\ then $\alpha\left(  p,q,2\right)  =\alpha\left(
p,q,q\right)  $, while for
$q>>2p$\ then strict inequality holds, showing in particular that%
\[
\alpha\left(  p,\infty,2\right)  =2^{1/p}\left(
p^{\prime}+1\right) ^{1/p^{\prime}}\,.
\]
The problem with $r=2$\ was then improved by many authors.
Belloni-Kawohl \cite{Belloni-Kawohl} and Kawohl \cite{Kawohl}
proved that the range where equality holds can be extended to
$2p+1$. Buslaev-Kondratiev-Nazarov\
\cite{Buslaev-Kondratiev-Nazarov}, refining a result from Egorov
\cite{Egorov}, showed that strict inequality holds as soon as
$q>3p$.

(iv) The importance of these best constants is, when $r=q$, to
generalize an isoperimetric inequality known as Wulff theorem, cf.
\cite{Dacorogna-Gangbo-Subia} (see also Lindquist-Peetre
\cite{Lindquist-Peetre}). The case $r=2$\ is important in many
different contexts, see for example \cite{Egorov-Kondratiev},
\cite{Lindquist}, \cite{Manasevich-Mawhin} or \cite{Otani}.

(v) In \cite{Dacorogna-Gangbo-Subia} the cases $r=q$\ and $r=2$
were treated separately. One of the aims of the present article
is, by the introduction of the parameter $r$, to unify these
treatments and, at the same time, to generalize the known results.

(vi) We would like to conclude this introduction by calling the
attention to some problems that we were not able to resolve. It is
believed, and supported by some numerical evidences, that the
equality between $\alpha\left( p,q,r\right)  $ and $\alpha\left(
p,q,q\right)  $ breaks down at exactly $\left(  2r-1\right)  p$
(B. Kawohl informed us that A.I. Nazarov \cite{NNuovo}, has
recently shown that when $r=2$\ the equality does indeed hold when
$q\leq3p$). A related question is to know the actual value of
$\alpha\left( p,q,r\right) $\ when the equality breaks down.
Another problem is to know for which $r\in\left[  2,q+1\right] $\
$\underset{2\leq r\leq q+1}{\min}\left\{ \alpha\left( p,q,r\right)
\right\}  $ is attained. By the theorem we know
that%
\[
\underset{2\leq r\leq q+1}{\max}\left\{  \alpha\left( p,q,r\right)
\right\} =\alpha\left(  p,q,q\right)  .
\]
\end{remark}

\section{Proof of the main result}

We proceed first with three lemmas and then with the proof of the
theorem.

\begin{lemma}
Let $p>1$,\ $q\geq r-1\geq1$. Let $F,K:\left(  0,1\right]
\rightarrow
\mathbb{R}$\ be defined by%

\[
K\left(  m\right)  =2\left(  \frac{p^{\prime}}{q}\right)  ^{\frac{1}%
{p^{\prime}}}\!\!\left[  \frac{q(p-1)+p}{2p\left(  1-r\left(
m\right)  \right) }\right]
^{\frac{p^{\prime}+q}{p^{\prime}q}}\!\!\int_{-m}^{1}\left[
1-r\left( m\right)  +r\left(  m\right)  \left|  z\right|
^{r-2}z-\left| z\right| ^{q}\right]  ^{\frac{1}{p^{\prime}}}\!dz
\]%
\[
F\left(  m\right)  =%
{\displaystyle\int_{-m}^{1}}
\frac{\left|  z\right|  ^{r-2}z}{\left[  1-r\left(  m\right)
+r\left( m\right)  \left|  z\right|  ^{r-2}z-\left|  z\right|
^{q}\right]  ^{1/p}}dz
\]
where
\[
r\left(  m\right)  =\frac{1-m^{q}}{1+m^{r-1}}.
\]
The following then holds%
\[
\alpha\left(  p,q,r\right)  =\inf\left\{  K\left(  m\right)
:m\in\left( 0,1\right]  \text{ and }F\left(  m\right) =0\right\} .
\]
\end{lemma}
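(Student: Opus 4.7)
The plan is to produce a minimizer by direct methods, extract its Euler--Lagrange equation, and reduce the ratio $\|u'\|_p/\|u\|_q$ to a function of the single parameter $m=-\min u/\max u$. Existence of a minimizer $u\in W^{1,p}_{per}(-1,1)$ follows from the direct method: using the invariance $u\mapsto cu$ one normalizes $\|u\|_q=1$, the one-dimensional compact embedding $W^{1,p}\hookrightarrow L^q$ applies for every $q$, the numerator is weakly lower semicontinuous, and the orthogonality constraint $\int|u|^{r-2}u=0$ is weakly closed. The minimizer satisfies
$$
(|u'|^{p-2}u')'+\lambda|u|^{q-2}u+\mu|u|^{r-2}=0
$$
with Lagrange multipliers $\lambda,\mu$. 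Multiplication by $u$ and integration, combined with the orthogonality, gives $\lambda\|u\|_q^q=\|u'\|_p^p>0$. Multiplication by $u'$ produces the first integral
$$
\frac{1}{p^{\prime}}|u'|^p+\frac{\lambda}{q}|u|^q+\frac{\mu}{r-1}|u|^{r-2}u=E.
$$

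Next I would rescale so that $M:=\max u=1$ and set $m=-\min u\in(0,1]$. Since $u'$ vanishes at both extrema, evaluating the first integral there yields
$$
\mu=-\frac{(r-1)\lambda}{q}\,r(m),\qquad r(m)=\frac{1-m^q}{1+m^{r-1}},
$$
and $E=\lambda(1-r(m))/q$, whence
$$
|u'|^p=\frac{p^{\prime}\lambda}{q}\bigl[\,1-r(m)+r(m)|u|^{r-2}u-|u|^q\,\bigr].
$$
A rescaling argument shows that the minimizer has exactly one full oscillation on $(-1,1)$: an $n$-fold oscillation is of the form $v(nx)$ for a single-oscillation periodic $v$ with the same extrema, and a direct computation gives the ratio $n\,\|v'\|_p/\|v\|_q$, which is strictly larger for $n\geq2$ (note that orthogonality is preserved under $v\mapsto v(nx)$). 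On each of the two monotone branches, the change of variable $x\mapsto u$ transforms every global quantity into an integral on $[-m,1]$ weighted by $1/|u'|$. The period condition $\int_{-m}^{1} du/|u'|=1$ then fixes $\lambda$ as a function of $m$; integrating the first integral over $(-1,1)$ and using the orthogonality yields the clean identity $\|u\|_q^q=2p^{\prime}(1-r(m))/(p^{\prime}+q)$. Substituting these expressions into $\|u'\|_p/\|u\|_q$ produces exactly $K(m)$, while the orthogonality $\int|u|^{r-2}u\,dx=0$, rewritten in the variable $u$, becomes $F(m)=0$.

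The opposite inequality $\alpha(p,q,r)\leq\inf\{K(m):F(m)=0\}$ is obtained by reversing the construction: for every $m\in(0,1]$ with $F(m)=0$, integrating the reduced first-integral ODE with prescribed extrema $1$ and $-m$ and period $2$ produces an admissible competitor with ratio exactly $K(m)$. The most delicate point is the single-oscillation reduction, which must also exclude degenerate minimizers possessing intervals of zeros or additional interior critical points on a monotone branch; the low regularity of $|u|^{r-2}u$ for $r$ close to $1$ forces the Euler--Lagrange equation and the conservation law to be interpreted in a weak sense, but the phase-plane analysis of the reduced ODE carries through and pins down the structure of the minimizer.
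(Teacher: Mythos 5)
Your overall route is the same as the paper's: direct method for existence, Euler--Lagrange equation with two multipliers, first integral, evaluation at the extrema to express $\mu$ and the energy constant in terms of $m$, reduction to a single oscillation, change of variables $x\mapsto u$ to produce $K$ and $F$, and a reverse construction from the ODE $u'=\gamma\,h(u)$ for each admissible $m$. All the algebra you record (the values of $\mu$ and $E$, the identity $\|u\|_q^q=2p'(1-r(m))/(p'+q)$, the form of the reduced ODE) agrees with the paper.

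The one genuine gap is the single-oscillation reduction, which you explicitly defer. Your claim that an $n$-fold oscillation is of the form $v(nx)$ for a single-oscillation $v$ \emph{with the same extrema} presupposes that every monotone branch of the minimizer runs the full range from $-m$ to $1$; equivalently, that $u'$ vanishes only where $u=1$ or $u=-m$. From the first integral this amounts to showing that
\[
g(X)=1-r(m)+r(m)|X|^{r-2}X-|X|^{q}
\]
is strictly positive on $(-m,1)$ (it vanishes at the endpoints). This is exactly where the hypothesis $q\geq r-1$ enters: at an interior critical point $\overline{X}>0$ of $g$ one has $\overline{X}^{\,q-r+1}=(r-1)r(m)/q$, hence $g(\overline{X})=1-r(m)+r(m)\overline{X}^{\,r-1}\left(1-\tfrac{r-1}{q}\right)>0$, and for $X<0$ the function $g$ is monotone; so $g$ has no interior zero. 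Without this, $g$ could vanish at some $X_0\in(-m,1)$, the solution could turn around at level $X_0$, and the minimizer could consist of bumps of different amplitudes, defeating the rescaling comparison. Since your proposal never uses $q\geq r-1$ anywhere, this step is not merely a deferred technicality but the missing ingredient that makes the structure theorem (one zero of $u$, two zeros of $u'$, symmetry, and hence the reduction of all integrals to a single branch over $[-m,1]$) true. The rest of your argument, including the reverse inequality, then goes through as in the paper.
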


\begin{proof}
The proof is similar in spirit to the one of
\cite{Dacorogna-Gangbo-Subia} but it differs in many technical
aspects.

\textit{Step 1 (Existence of minima).} The minimum is easily seen
to be
attained. Moreover there exists a minimum $u$ that satisfies also%
\[
u=u\left(  x\right)  =u\left(  p,q,r,x\right)  \in
W_{0}^{1,p}\left( -1,1\right)
\]
(which means, in particular, that we can assume that $u\left(
-1\right) =u\left(  1\right)  =0$) so that
\[
\alpha\left(  p,q,r\right)  =\frac{\left\|  u^{\prime}\right\|
_{p}}{\left\| u\right\|  _{q}}\text{ and}\;\int_{-1}^{1}\left|
u\right|  ^{r-2}u=0.
\]
In this step we only need $p,r>1$, $q\geq1$.

\textit{Step 2 (Euler-Lagrange equation)}. The function $u$ found
in the
preceding step satisfies%
\[
u,\;\left|  u^{\prime}\right|  ^{p-2}u^{\prime}\in C^{1}\left(
\left[ -1,1\right]  \right)
\]
and there exists $\mu\in\mathbb{R}$\ so that%
\begin{equation}
\label{Euler-Lagrange (lemme)} p\left(  \left| u^{\prime}\right|
^{p-2}u^{\prime}\right) ^{\prime
}+p\,\alpha^{p}\left\|  u\right\|  _{q}^{p-q}\left|  u\right|  ^{q-2}%
u-\mu(r-1)\left|  u\right|  ^{r-2}=0.
\end{equation}
Before briefly explaining how this equation can be derived, we
want to point out that it can be shown, with simple arguments,
that the Lagrange multiplier $\mu=0$ when $q=r$. It is this fact
that makes the whole analysis easier when $q=r$\ and that allows
also to treat the case $1<q=r<2$; however we do not discuss this
case in details and we refer to \cite{Dacorogna-Gangbo-Subia}.

\noindent Let $u$\ be a minimum and let $\varphi,\;\theta\in
C_{0}^{\infty }(-1,1)$ with
$(r-1)\int_{-1}^{1}{|u|^{r-2}\theta}=1$ and let $|\varepsilon
|,\;\left|  t\right|  <1$. Define then%
\[
\Phi(\varepsilon,t)=\int_{-1}^{1}{|u^{\prime}+\varepsilon\varphi^{\prime
}+t\theta^{\prime}|^{p}}-\alpha^{p}\left[
\int_{-1}^{1}{|u+\varepsilon \varphi+t\theta|^{q}}\right]
^{\frac{p}{q}},
\]%
\[
\Psi(\varepsilon,t)=\int_{-1}^{1}{|u+\varepsilon\varphi+t\theta|^{r-2}%
[u+\varepsilon\varphi+t\theta]}.
\]
It is easily seen that $\Phi \in C^1$ and $\Psi\in C^1$ if
$r\geq2$ and that $\Psi_{t}(0,0)=1\neq0$ for any choice of
$\theta$ as above. Therefore, applying the implicit function
theorem to $\Psi$, we find that there exist $\varepsilon_{0}<<1$
and a function $\tau\in C^{1}(-\varepsilon _{0},\varepsilon_{0})$,
with $\tau(0)=0$ such that $\Psi(\varepsilon
,\tau(\varepsilon))=0$,\ $\forall\varepsilon\in(-\varepsilon_{0}%
,\varepsilon_{0})$; in particular we deduce that
$\tau^{\prime}(0)=(1-r)\int _{-1}^{1}{|}u{|^{r-2}}{\varphi}$.
Since $\Phi(\varepsilon,\tau(\varepsilon))$ is minimum at
$\varepsilon=0$ we deduce that $\Phi_{\varepsilon}(0,0)+\Phi
_{t}(0,0)\tau^{\prime}(0)=0$. This leads to the Euler-Lagrange
equation in the
weak form which holds for every $\varphi\in C_{0}^{\infty}(-1,1)$, namely%
\[
p\int_{-1}^{1}|u^{\prime}|^{p-2}u^{\prime}\,\varphi^{\prime}-\alpha
^{p}\,p\,\Vert
u\Vert_{q}^{p-q}\int_{-1}^{1}|u|^{q-2}u\,\varphi+\mu
(r-1)\,\int_{-1}^{1}|u|^{r-2}\,\varphi=0
\]
where $\mu=\mu\left(  \alpha,\theta,u\right)  =-\Phi_{t}(0,0)\in\mathbb{R}%
$\ is a constant. We then deduce that $\left|  u^{\prime}\right|
^{p-2}u^{\prime}\in C^{1}$ and that (\ref{Euler-Lagrange (lemme)})
holds.
Moreover since the function $g\left(  t\right)  =\left|  t\right|  ^{p-2}%
t$\ has a continuous inverse, we have that
$u^{\prime}=g^{-1}\left( \left| u^{\prime}\right|
^{p-2}u^{\prime}\right)  $ is continuous and hence $u\in C^{1}$.

\noindent Note, for further reference, that we also have%
\begin{equation}
\left(  \left|  u^{\prime}\right|  ^{p}\right)
^{\prime}=p^{\prime}\left( \left|  u^{\prime}\right|
^{p-2}u^{\prime}\right)  ^{\prime}u^{\prime}\,.
\label{uprimepprime=...}%
\end{equation}
This is obviously true if $u\in C^{2}$. In our context this
follows from the
fact that the functions $f\left(  v\right)  =\left|  v\right|  ^{p^{\prime}}%
$\ and $v=\left|  u^{\prime}\right|  ^{p-2}u^{\prime}$ are both
$C^{1}$ which hence implies the claim, namely
\[
\left(  \left|  u^{\prime}\right|  ^{p}\right)  ^{\prime}=\left(
f\left( \left|  u^{\prime}\right|  ^{p-2}u^{\prime}\right) \right)
^{\prime }=f^{\prime}\left(  \left|  u^{\prime}\right|
^{p-2}u^{\prime}\right) \left(  \left|  u^{\prime}\right|
^{p-2}u^{\prime}\right)  ^{\prime }=p^{\prime}u^{\prime}\left(
\left|  u^{\prime}\right|  ^{p-2}u^{\prime }\right)  ^{\prime}\,.
\]

\textit{Step 3 (Integrated Euler-Lagrange equation)}. Multiplying
the preceding equation by $u^{\prime}$, using
(\ref{uprimepprime=...}) and
integrating we get ($c$ being a constant)%
\begin{equation}
\left(  p-1\right)  \left|  u^{\prime}\left(  x\right)  \right|
^{p}+\frac {p}{q}\alpha^{p}\left\|  u\right\|  _{q}^{p-q}\left|
u\left(  x\right) \right|  ^{q}-\mu\left|  u\left(  x\right)
\right|  ^{r-2}u\left(  x\right)
=c. \label{integratedee}%
\end{equation}
It is this version of the Euler-Lagrange equation that we will
almost always use (but not exclusively).

\noindent We propose, although this is not necessary for the
future developments, to derive directly (\ref{integratedee})
without using (\ref{Euler-Lagrange (lemme)}). The advantage of
this direct derivation is that it is also valid if $1<r<2$;
however it is not clear how to infer the required regularity of
$u$\ from (\ref{integratedee}). We now sketch the proof of this
fact. Consider the functional
\[
G(v)=\Vert v^{\prime}\Vert_{p}^{p}-\alpha^{p}\Vert v\Vert_{q}^{p}%
\]
where $v\in\mathcal{W}_{r}=\left\{  v\in W_{0}^{1,p}(-1,1)\text{ and }%
\int_{-1}^{1}\left|  v\right|  ^{r-2}v=0\right\}  $. We know that
it has a minimum at $u$. Consider for any $\varphi\in
W_{0}^{1,\infty}(-1,1)$\ such that
$\int_{-1}^{1}|u|^{r-2}u{\,\varphi}^{\prime}=0$ and for
$|\varepsilon
|\leq1$ the function%
\[
w_{\varepsilon}(x)=x+\varepsilon\frac{\varphi(x)}{2\left\|
\varphi^{\prime }\right\|  _{\infty}}.
\]
Observe that $w_{\varepsilon}:\left[  -1,1\right]
\rightarrow\left[ -1,1\right]  $ is a homeomorphism. It is easy to
see that if $v_{\varepsilon
}\left(  x\right)  =u(w_{\varepsilon}^{-1}(x))$, then $v_{\varepsilon}%
\in\mathcal{W}_{r}$ and therefore $G(u)=0\leq G(v_{\varepsilon})$,
which in
turn implies that%
\[
\left.  \frac{d}{d\varepsilon}G(v_{\varepsilon})\right|
_{\varepsilon=0}=0.
\]
We then deduce that
\begin{equation}
(1-p)\int_{-1}^{1}{\frac{|u^{\prime}(t)|^{p}\varphi^{\prime}(t)}{2\left\|
\varphi^{\prime}\right\|
_{\infty}}}dt=\alpha^{p}\,\frac{p}{q}{\left\|
u\right\|  }_{q}^{p-q}\int_{-1}^{1}\frac{|u(t)|^{q}\varphi^{\prime}%
(t)}{2\left\|  \varphi^{\prime}\right\|  _{\infty}}dt\,. \label{eqintegrata}%
\end{equation}
In order to have a more classical weak form of the integrated
Euler-Lagrange
equation, we need to remove the hypothesis that $\int_{-1}^{1}|u|^{r-2}%
u{\,\varphi}^{\prime}=0$. To do this we let $\psi\in
W_{0}^{1,\infty}(-1,1)$ be arbitrary and we choose
\[
\varphi\left(  x\right)  =\psi(x)-\left[  \int_{-1}^{1}|u(t)|^{r-2}%
u(t)\psi^{\prime}(t)dt\right]  f(x),
\]
where%
\[
f(x)=\frac{\int_{-1}^{x}|u(s)|^{r-2}u(s)ds}{\int_{-1}^{1}|u(a)|^{2r-2}da}.
\]
With this choice we obtain from (\ref{eqintegrata}) that%
\[
(1-p)\int_{-1}^{1}|u^{\prime}|^{p}\psi^{\prime}+\sigma\int_{-1}^{1}%
|u|^{r-2}u\psi^{\prime}-\alpha^{p}\,\frac{p}{q}{\left\|  u\right\|  }%
_{q}^{p-q}\int_{-1}^{1}|u|^{q}\psi^{\prime}=0
\]
for an appropriate $\sigma=\sigma\left(  \alpha,f,u\right)  \in\mathbb{R}%
$\ (it can be proved that it is identical to the $\mu$\ in
(\ref{integratedee})) and for any $\psi\in
W_{0}^{1,\infty}(-1,1)$. The integrated form (\ref{integratedee})
follows then immediately.

\textit{Step 4 (Value of }$\mu$\textit{, }$c$\textit{\ and
}$\left\| u\right\|  _{q}$\textit{ in terms of }$m$\textit{)}.
First observe that by
rescaling $u$, we can assume that%
\begin{align*}
\underset{x\in\left[  -1,1\right]  }{\max}\left\{  u\left(
x\right)
\right\}   &  =1\\
\underset{x\in\left[  -1,1\right]  }{\min}\left\{  u\left(
x\right) \right\}   &  =-m,\;m\in\left(  0,1\right]  .
\end{align*}
Writing
\begin{equation}
r\left(  m\right)  =\frac{1-m^{q}}{1+m^{r-1}}, \label{rdim}%
\end{equation}
we claim that
\begin{equation}
\mu=\frac{p}{q}\alpha^{p}\left\|  u\right\|  _{q}^{p-q}r\left(
m\right)  ,
\label{mu}%
\end{equation}%
\begin{equation}
c=\alpha^{p}\frac{p}{q}\left\|  u\right\|  _{q}^{p-q}(1-r(m)),
\label{costante}%
\end{equation}%
\begin{equation}
\left\|  u\right\|  _{q}=\left[ \frac{2p(1-r(m))}{q(p-1)+p}\right]
^{\frac{1}{q}}. \label{normau}%
\end{equation}
We start by establishing (\ref{mu}) and (\ref{costante}). Writing
the integrated Euler-Lagrange equation (\ref{integratedee}) for
the point of maximum $x_{0}$, i.e. $u(x_{0})=1$ (and
$u^{\prime}(x_{0})=0$), and for the point of minimum $x_{1}$, i.e.
$u(x_{1})=-m$ (and $u^{\prime}(x_{1})=0$), we get
\begin{equation}
\mu m^{r-1}+\alpha^{p}\frac{p}{q}\,\Vert
u\Vert_{q}^{p-q}m^{q}=-\mu+\alpha
^{p}\frac{p}{q}\,\Vert u\Vert_{q}^{p-q}=c. \label{valeurs de mu et c}%
\end{equation}
The identities (\ref{mu}) and (\ref{costante}) follow then
immediately. The
integrated Euler-Lagrange equation becomes then%
\begin{equation}
\left.
\begin{array}
[c]{c}%
(p-1)|u^{\prime}|^{p}+\alpha^{p}\frac{p}{q}\Vert u\Vert_{q}^{p-q}%
|u|^{q}-\alpha^{p}\frac{p}{q}\Vert u\Vert_{q}^{p-q}r(m)|u|^{r-2}u\\
=\alpha^{p}\frac{p}{q}\Vert u\Vert_{q}^{p-q}[1-r(m)].
\end{array}
\right.  \label{E-L integree avec mu et c}%
\end{equation}
or equivalently%
\begin{equation}
|u^{\prime}|=\left[  \frac{p^{\prime}}{q}\alpha^{p}\left\|
u\right\|
_{q}^{p-q}\right]  ^{\frac{1}{p}}[1-r(m)+r(m)|u|^{r-2}u-|u|^{q}]^{\frac{1}{p}%
}. \label{E-L integree avec mu et c (equivalent)}%
\end{equation}
Integrating (\ref{E-L integree avec mu et c}) over $\left(
-1,1\right)  $ and recalling that $\left\|  u^{\prime}\right\|
_{p}=\alpha\left\|  u\right\| _{q}$\ we get (\ref{normau}).

\textit{Step 5 (Qualitative properties of the solution)}. We now show that%
\begin{equation}
u^{\prime}\left(  x\right)  =0\Longleftrightarrow u\left( x\right)
=1\text{
or }u\left(  x\right)  =-m. \label{Derivee 0 en -m et 1}%
\end{equation}
Indeed the implication ($\Leftarrow$) is trivial. We next discuss
the counter
implication. Let%
\[
g\left(  X\right)  =1-r(m)+r(m)|X|^{r-2}X-|X|^{q},\;X\in\left[
-m,1\right]
\]
so that%
\[
|u^{\prime}|=\left[  \frac{p^{\prime}}{q}\alpha^{p}\left\|
u\right\| _{q}^{p-q}\right]  ^{\frac{1}{p}}[g\left(  u\right)
]^{\frac{1}{p}}.
\]
Observe that $g\left(  -m\right)  =g\left(  1\right)  =0$. Using
the hypothesis $q\geq r-1$\ one easily shows that at points
$\overline{X}$\ where $g^{\prime}\left(  \overline{X}\right) =0$\
then $g\left(  \overline {X}\right)  >0$. This shows that the
function $g$ never vanishes in $\left( -m,1\right)  $. This
implies that $u^{\prime}\left(  x\right)  \neq0$ if$\ u\left(
x\right)  \neq1$ and $u\left(  x\right)  \neq-m$, as claimed.

\noindent It can then be proved, exactly as in
\cite{Dacorogna-Gangbo-Subia} and we omit the details, that $u$\
has only one zero $\alpha\in\left(
-1,1\right)  $ ($\pm1$ being, by Step 1, also zeroes of $u$) and $u^{\prime}%
$\ has only two zeroes $\eta_{1}=\left(  \alpha-1\right)  /2$ and
$\eta _{2}=\left(  \alpha+1\right)  /2$. Furthermore the function
is symmetric in
the following sense%
\[
u\left(  x\right)  =\left\{
\begin{array}
[c]{cc}%
u\left(  2\eta_{1}-x\right)  & \text{if }x\in\left[  -1,\alpha\right] \\
\, & \\
u\left(  2\eta_{2}-x\right)  & \text{if }x\in\left[
\alpha,1\right]  .
\end{array}
\right.
\]
As a consequence we obtain that, for every continuous function $f:\mathbb{R}%
^{2}\rightarrow\mathbb{R}$,%
\begin{equation}
\int_{-1}^{1}f\left(  u\left(  x\right)  ,|u^{\prime}\left(
x\right) |\right)  dx=2\int_{\eta_{1}}^{\eta_{2}}f\left( u\left(
x\right)
,|u^{\prime}\left(  x\right)  |\right)  dx. \label{symetrie de u (lemme)}%
\end{equation}

\textit{Step 6 (The functions }$K$\textit{\ and }$F$\textit{)}.
Let $-m$\ be
the minimal value of the solution $u$. We will then establish that%
\begin{equation}
\left.
\begin{array}
[c]{c}%
\alpha=K\left(  m\right)  \equiv\\
2\!\left(  \frac{p^{\prime}}{q}\right)
^{\frac{1}{p^{\prime}}}\left[ \frac{q(p-1)+p}{2p\left( 1-r\left(
m\right)  \right)  }\right]
^{\frac{p^{\prime}+q}{p^{\prime}q}}%
{\displaystyle\int_{-m}^{1}}
\left[  1-r\left(  m\right)  +r\left(  m\right)  \left|  z\right|
^{r-2}z-\left|  z\right|  ^{q}\right]  ^{\frac{1}{p^{\prime}}}\!dz
\end{array}
\right.  \label{alpha}%
\end{equation}%
\begin{equation}
F\left(  m\right)  \equiv%
{\displaystyle\int_{-m}^{1}}
\frac{\left|  z\right|  ^{r-2}z}{\left[  1-r\left(  m\right)
+r\left(
m\right)  \left|  z\right|  ^{r-2}z-\left|  z\right|  ^{q}\right]  ^{1/p}%
}dz=0. \label{fdiemme}%
\end{equation}
We now briefly explain how to derive these identities. We start
with (\ref{alpha}). (Note that the derivation here is done in a
slightly different manner than in \cite{Dacorogna-Gangbo-Subia}.
There a function $L$ was derived instead of the present function
$K$ below; they coincide at the minimal value). Using (\ref{E-L
integree avec mu et c}) we obtain
\[
|u^{\prime}|^{p}=\left[  \frac{p^{\prime}}{q}\alpha^{p}\left\|
u\right\|
_{q}^{p-q}\right]  ^{\frac{1}{p^{\prime}}}[1-r(m)+r(m)|u|^{r-2}u-|u|^{q}%
]^{\frac{1}{p^{\prime}}}|u^{\prime}|.
\]
Let $\eta_{1},\eta_{2}$ be the zeroes of $u^{\prime}$. Recalling
(\ref{symetrie de u (lemme)}), integrating the above equation over
$\left( \eta_{1},\eta_{2}\right)  $ and performing the change of
variable $z=u\left(
x\right)  $\ in the right hand side of the equation, we obtain%
\[
\left\|  u^{\prime}\right\|  _{p}^{p}=2\left(
\frac{p^{\prime}}{q}\alpha ^{p}\left\|  u\right\|
_{q}^{p-q}\right)  ^{\frac{1}{p^{\prime}}}\int
_{-m}^{1}[1-r(m)+r(m)|z|^{r-2}z-|z|^{q}]^{\frac{1}{p^{\prime}}}dz
\]
which combined with (\ref{normau}) and with $\left\|
u^{\prime}\right\| _{p}=\alpha\left\|  u\right\|  _{q}$ implies
(\ref{alpha}).

\noindent To obtain (\ref{fdiemme}) we rewrite the condition $\int_{-1}%
^{1}\left|  u\right|  ^{r-2}u=0$ in the following manner. We first
observe that ($\eta_{1},\eta_{2}$ being the zeroes of
$u^{\prime}$), appealing to (\ref{E-L integree avec mu et c
(equivalent)}) and (\ref{symetrie de u (lemme)}), we have
\begin{align*}
0  &  =\int_{-1}^{1}|u|^{r-2}u=2\int_{\eta_{1}}^{\eta_{2}}|u|^{r-2}%
u=2\int_{\eta_{1}}^{\eta_{2}}\frac{|u|^{r-2}uu^{\prime}}{u^{\prime}}\\
&  =2\left[  \frac{p^{\prime}}{q}\alpha^{p}\left\|  u\right\|  _{q}%
^{p-q}\right]  ^{\frac{-1}{p}}%
{\displaystyle\int_{\eta_{1}}^{\eta_{2}}}
\frac{|u|^{r-2}uu^{\prime}}{\left[  1-r\left(  m\right) +r\left(
m\right) \left|  u\right|  ^{r-2}u-\left|  u\right| ^{q}\right]
^{1/p}}.
\end{align*}
Performing the change of variable $z=u\left(  x\right)  $ we get
(\ref{fdiemme}).

\textit{Step 7 (Equivalence of minima)}. Denote by%
\[
\beta=\inf\left\{  K\left(  m\right)  :m\in\left(  0,1\right]
\text{ and }F\left(  m\right)  =0\right\}  .
\]
The aim of this step is to show that $\alpha=\beta$ concluding
thus the proof of the lemma. From the previous steps we know that
$\beta\leq\alpha$. We now wish to show the reverse inequality. Let
$m\in(0,1]$ be such that $\beta=K\left(  m\right)  $\ and $F\left(
m\right)  =0$\ (such an $m$ exists by continuity of the functions
$F$ and $K$\ and by the fact that $F\left( 0\right) \neq0$). To
conclude to the inequality $\alpha\leq\beta$\ it is
enough to show that we can find $u\in W_{per}^{1,p}(-1,1)$ with $\int_{-1}%
^{1}\left|  u\right|  ^{r-2}u=0$ such that
\begin{equation}
K(m)=\frac{\left\|  u^{\prime}\right\|  _{p}}{\left\|  u\right\|
_{q}}\ \,\, \textnormal{and}\,\, m=-\min\limits_{x\in [-1,1]}
u(x)\,.
\label{K(m)=alpha}%
\end{equation}
This $u$\ will be constructed as follows. We claim that we can
find $u\in W^{1,p}(-1,0)$ a solution of the problem
\[
(E_{m})\left\{
\begin{array}
[c]{l}%
u^{\prime}=\gamma h(u),\;x \in [-1,0) \\
u(-1)=-m,\;u(0)=1\\
\max u(x)=\max|u(x)|=1
\end{array}
\right.
\]
where
\[
h(s)=[1-r(m)+r\left(  m\right)  |s|^{r-2}s-|s|^{q}]^{\frac{1}{p}}%
\;,\;\gamma=\int_{-m}^{1}\frac{ds}{h(s)}.
\]
Note, for further reference, that since $h\left(  -m\right)
=h\left(
1\right)  =0$ \ then%
\[
u^{\prime}\left(  -1\right)  =u^{\prime}\left(  0\right)  =0.
\]
A solution of $(E_{m})$ is constructed as follows. Let
$H:[-m,1]\rightarrow \left[  -\gamma,0\right]  $ be defined by
$H(y)=\int_{1}^{y}\frac{dx}{h(x)}$.
The solution of $(E_{m})$\ is then given by%
\[
u\left(  x\right)  =H^{-1}(\gamma x).
\]
Using the fact that $F\left(  m\right)  =0$\ we obtain that%
\begin{align*}
\int_{-1}^{0}|u|^{r-2}u  &  =\int_{-1}^{0}\frac{|u|^{r-2}uu^{\prime}%
}{u^{\prime}}=\frac{1}{\gamma}\int_{-1}^{0}\frac{|u|^{r-2}uu^{\prime}%
}{h\left(  u\right)  }\\
&  =\frac{1}{\gamma}\int_{-m}^{1}\frac{\left|  z\right|
^{r-2}z}{h\left( z\right)  }dz=\frac{1}{\gamma}F\left(  m\right)
=0.
\end{align*}
We then extend $u$ to $(0,1]$\ so as to be even. It is then clear
that $u\in W_{per}^{1,p}(-1,1)$ and that
$\int_{-1}^{1}|u|^{r-2}u=0$.

\noindent It therefore remains to prove (\ref{K(m)=alpha}). From
$(E_{m})$, the fact that $\int_{-1}^{1}|u|^{r-2}u=0$ and the
evenness of $u$\ we deduce that
\begin{equation}
\Vert u^{\prime}\Vert_{p}^{p}=\gamma^{p}[2(1-r(m))-\Vert
u\Vert_{q}^{q}].
\label{(1) dans etape 6}%
\end{equation}
In a similar way we have from $(E_{m})$ that%
\[
\left(  u^{\prime}\right)  ^{p}=\gamma^{p-1}\left[  h(u)\right]
^{p-1}u^{\prime}\,;
\]
using the evenness of $u$, and after a change of variables\ we deduce that%
\[
\Vert
u^{\prime}\Vert_{p}^{p}=2\gamma^{p-1}\int_{-m}^{1}[h(s)]^{p-1}ds\,.
\]
Recalling the definition of $K\left(  m\right)  $ we obtain%
\begin{equation}
\Vert u^{\prime}\Vert_{p}^{p}=\gamma^{p-1}\!\left(  \frac{q}{p^{\prime}%
}\right)  ^{\frac{1}{p^{\prime}}}\left[  \frac{2p\left( 1-r\left(
m\right)
\right)  }{q\left(  p-1\right)  +p}\right]  ^{\frac{p^{\prime}+q}{p^{\prime}%
q}}K\left(  m\right)  \,. \label{(2) dans etape 6}%
\end{equation}
From $(E_{m})$ we also have%
\[
\left|  u^{\prime}\right|  ^{p}=\gamma^{p}[1-r(m)+r\left( m\right)
|u|^{r-2}u-|u|^{q}].
\]
Differentiating this equation, using (\ref{uprimepprime=...}), we
get after a
simplification by $u^{\prime}$\ that%
\[
p^{\prime}\left(  \left|  u^{\prime}\right|
^{p-2}u^{\prime}\right)
^{\prime}=\gamma^{p}\left[  \left(  r-1\right)  r(m)|u|^{r-2}-q|u|^{q-2}%
u\right]  .
\]
Multiplying this equation by $u$,\ integrating, bearing in mind
that $\int_{-1}^{1}|u|^{r-2}u=0$, that $u$\ is even and that
$u^{\prime}\left( -1\right)  =u^{\prime}\left(  1\right)  =0$, and
using (\ref{(1) dans etape
6}) we get%
\begin{equation}
\Vert
u^{\prime}\Vert_{p}^{p}=\frac{q}{p^{\prime}}\gamma^{p}\,\Vert
u\Vert _{q}^{q}\text{ and }\Vert
u\Vert_{q}^{q}=\frac{2p^{\prime}\left(  1-r\left( m\right) \right)
}{q+p^{\prime}}=\frac{2p\left(  1-r\left(  m\right)
\right)  }{q\left(  p-1\right)  +p}. \label{(3) dans etape 6}%
\end{equation}
Combining (\ref{(2) dans etape 6}) and (\ref{(3) dans etape 6}) we
find the
claimed result%
\[
K(m)=\frac{\left\|  u^{\prime}\right\|  _{p}}{\left\|  u\right\|
_{q}}.
\]

\end{proof}
\begin{remark}\label{referee}
We observed that $q=r$ implies $\mu=0$, and so $m=1$. From the
previous step, we know that $\alpha=K(m)$, so
$\alpha(p,q,q)=K(1).$
\end{remark}
 We now study the functions $F$ (cf. Lemma \ref{Etude de
F}) and $K$ (cf. Lemma \ref{Etude de K}).

\begin{lemma}
\label{Etude de F}Let $F:\left(  0,1\right] \rightarrow\mathbb{R}$
be the function defined in the preceding lemma. The following
properties then hold.

(i) $F\left(  1\right)  =0$, for every $p>1$\ and $q\geq
r-1\geq1$.

(ii) If $q\leq rp+r-1$\ then $F\left(  m\right)  \neq0$ for every
$m\in(0,1)$.

(iii) If $q>\left(  2r-1\right)  p$, then there exists $m\in\left(
0,1\right)  $ (i.e. $m\neq1$) such that $F\left(  m\right)  =0$.
Moreover $F<0$\ for $m$\ close to $1$ ($m<1$).
\end{lemma}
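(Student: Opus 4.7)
The plan follows the three claims in turn.

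\textbf{(i)} At $m=1$, $r(1)=(1-1)/(1+1)=0$, so the bracket in the integrand collapses to $1-|z|^{q}$, which is even in $z$, while $|z|^{r-2}z$ is odd; integrating the odd integrand over the symmetric interval $[-1,1]$ (with an integrable endpoint singularity because $p>1$) yields $F(1)=0$.

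\textbf{(iii)} I would first observe $F(0^{+})>0$: as $m\searrow 0$, $r(m)\to 1$ and the interval collapses to $[0,1]$ on which the integrand reduces to the strictly positive $z^{(r-1)/p'}(1-z^{q-r+1})^{-1/p}$. Next, the behavior of $F$ near $m=1$ is governed by an asymptotic expansion in the small parameter $\lambda=r(m)=\frac{q}{2}(1-m)+O((1-m)^{2})$. Writing
\[
g(z;m)=(1-|z|^{q})+\lambda\,(|z|^{r-2}z-1),
\]
the leading contribution as $m\nearrow 1$ comes from a boundary layer near $z=-1$ (where both $1-|z|^{q}$ and the missing length $1-m$ are small), and a careful matched expansion yields
\[
F(m)=c(p,q,r)\,(1-m)^{1/p'}+o\bigl((1-m)^{1/p'}\bigr),
\]
where $c(p,q,r)$, after the substitution $u=|z|^{q}$ and standard Beta-function identities, is a product of a positive factor and an affine function of $q$ that becomes negative once $q>(2r-1)p$. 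Thus $F<0$ just to the left of $m=1$ in this regime, and combined with $F(0^{+})>0$ and continuity, the intermediate value theorem delivers an $m\in(0,1)$ with $F(m)=0$; the statement $F<0$ near $m=1^{-}$ is a direct readout of the expansion.

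\textbf{(ii) and main obstacle.} Under $q\leq rp+r-1$ the same expansion gives $c(p,q,r)>0$, so $F>0$ just to the left of $m=1$. To upgrade to $F\neq 0$ throughout $(0,1)$ (the boundary positivities $F(0^{+})>0$ and $F(1^{-})>0$ alone do not exclude interior sign changes), I would prove that $F$ is strictly positive on $(0,1)$, ideally via strict monotonicity $F'(m)<0$. Because of the movable endpoint $z=-m$, $F'$ must be computed after fixing the integration domain, for instance via $z=\tfrac{1-m}{2}-\tfrac{1+m}{2}\cos\theta$ with $\theta\in[0,\pi]$, in which coordinates $F'(m)$ becomes a single integral whose integrand one can hope to sign pointwise. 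Equivalently, splitting $F(m)=A(m)-B(m)$ at $z=0$ and rescaling $\zeta=-z/m$ in the $[-m,0]$-part should reduce the desired inequality $A(m)>B(m)$ to a pointwise polynomial inequality in $\zeta$ and $m$ that is valid precisely when $q\leq rp+r-1$. The technical core, and my anticipated obstacle, is this sign analysis: the dependence of $g(z;m)$ on $m$ both through the coefficient $r(m)$ and through the endpoint $-m$ where $g$ vanishes forces naive differentiation under the integral sign to produce non-integrable singularities, and these cancel only after careful regularization; extracting the sharp transition value $rp+r-1$ from that cancellation is the delicate step.
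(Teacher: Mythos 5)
Your part (i) is fine and is equivalent to the paper's observation that $r(1)=0$ makes the integrand odd. The problems are in (ii) and (iii), where the substantive content of the lemma lives and where your proposal stops short of a proof. For (iii), the claimed expansion $F(m)=c\,(1-m)^{1/p'}+o\bigl((1-m)^{1/p'}\bigr)$ cannot be right: after the fixed-domain rewriting $F(m)=\int_0^1 g_m(t)\,dt$, obtained by splitting at $z=0$ and rescaling $z=-mt$ on $[-m,0]$, the function $F$ is $C^1$ up to $m=1$ with a \emph{finite} one-sided derivative, so the true behavior is linear in $1-m$ (a sanity check: for $p=q=r=2$ one computes $F(m)=\frac{\pi}{2}(1-m)$ exactly). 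The paper's Step 3 computes
\[
F'(1)=\Bigl(\frac{1}{p}-\frac{2r-1}{q}\Bigr)\,B\Bigl(\frac{2r-1}{q},\frac{1}{p'}\Bigr),
\]
from which the threshold $q=(2r-1)p$ is read off. You assert, but do not derive, that the leading coefficient ``becomes negative once $q>(2r-1)p$''; since producing this exact threshold is the entire point of (iii), this is a genuine gap, compounded by the incorrect power of $1-m$.

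For (ii), you correctly identify the right reduction --- split $F$ at $z=0$, rescale $\zeta=-z/m$ on the left piece, and prove a pointwise sign for the resulting integrand --- and this is precisely what the paper does: it shows $g_m(t)\ge 0$, equivalently a polynomial inequality $h_m(t)\ge 0$ on $[0,1]$. But you explicitly leave that sign analysis as the ``anticipated obstacle,'' and it is the technical core of the lemma: the paper needs a two-case argument ($q\le rp$ versus $rp<q\le rp+r-1$), an evaluation of $h_m$ at its critical point, and a further chain of auxiliary inequalities (the functions $G$, $H$, $\widetilde H$, $\varphi$) to extract the sharp bound $q\le rp+r-1$. Your alternative route via $F'(m)<0$ on $(0,1)$ is neither established nor needed: nonnegativity of the integrand together with its strict positivity on a subinterval (which follows from $h_m(0)=(1-m^{rp})(1-r(m))>0$ for $m<1$) already gives $F(m)>0$. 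So while the architecture of your plan for (ii) coincides with the paper's, the proof itself is missing.
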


\begin{proof}
The function $F\in C^{1}\left(  \left(  0,1\right]  \right)  $\
and\ we can
rewrite it in the following way%
\[
F(m)=\int_{0}^{1}g_{m}(t)\,dt,
\]
where
\[
g_{m}(t)=\frac{t^{r-1}}{[1-r(m)+r(m)t^{r-1}-t^{q}]^{\frac{1}{p}}}%
-\frac{t^{r-1}m^{r}}{[1-r(m)-r(m)t^{r-1}m^{r-1}-m^{q}t^{q}]^{\frac{1}{p}}}.
\]

\textit{Step 1.} Note that since $r(m)=0$, recalling that $r\left(
m\right) =\frac{1-m^{q}}{1+m^{r-1}}$, whenever $m=1$, we deduce
that $g_{1}(t)\equiv0$ and thus $F(1)=0$.

\textit{Step 2.} We will now prove that, when $q\leq rp+r-1$,\
then
\[
g_{m}\left(  t\right)  \geq0,\;\forall t\in\left[  0,1\right]
\]
leading to the claim. Observe that $g_{m}(t)\geq0$ if and only if
\begin{equation}
\left.
\begin{array}
[c]{c}%
h_{m}(t)\equiv1-r(m)-r(m)t^{r-1}m^{r-1}-m^{q}t^{q}-m^{rp}[1-r(m)+r(m)t^{r-1}%
-t^{q}]\\
=\left(  1-m^{rp}\right)  \left(  1-r(m)\right)  -\left(  m^{r-1}%
+m^{rp}\right)  r\left(  m\right)  t^{r-1}-\left(
m^{q}-m^{rp}\right) t^{q}\geq0\,.
\end{array}
\right.  \label{hm positif}%
\end{equation}
Note that%
\begin{equation}
h_{m}^{\prime}(t)=-\left(  r-1\right)  \left(
m^{r-1}+m^{rp}\right)  r\left(
m\right)  t^{r-2}-q\left(  m^{q}-m^{rp}\right)  t^{q-1}. \label{h'm positif}%
\end{equation}
To establish (\ref{hm positif}) we divide the proof into two
cases.

Case 1: $q\leq rp$. Observe that, in this case, since $0<m\leq1$,
then
trivially $h_{m}^{\prime}(t)\leq0$. On the other hand $h_{m}(0)\geq0=h_{m}%
(1)$, therefore (\ref{hm positif}) is proved.

Case 2: $rp<q\leq rp+r-1$. We will show that if there exists $\overline{t}%
\in\left[  0,1\right]  $ with $h_{m}^{\prime}(\overline{t})=0$
then necessarily $h_{m}(\overline{t})\geq0$. This fact coupled
with the observation that $h_{m}(0)\geq0=h_{m}(1)$ shows (\ref{hm
positif}). Note that
$h_{m}^{\prime}(\overline{t})=0$ if and only if%
\[
\overline{t}^{q-r+1}=\frac{r(m)(r-1)(m^{rp}+m^{r-1})}{q\left(  m^{rp}%
-m^{q}\right)  }.
\]
We therefore have (assuming that $\overline{t}\leq1$,\ otherwise
nothing is to
be proved)%
\[%
\begin{array}
[c]{l}%
h_{m}(\overline{t})=[1-r(m)](1-m^{rp})-\overline{t}^{r-1}r(m)(m^{r-1}%
+m^{rp})\frac{q-r+1}{q}\\
\geq\lbrack1-r(m)](1-m^{rp})-r(m)(m^{r-1}+m^{rp})\frac{q-r+1}{q}\\
\geq\frac{1}{q}\,\frac{m^{r-1}}{1+m^{r-1}}\{q(m^{q-r+1}+1)(1-m^{rp}%
)-(q-r+1)(1-m^{q})(1+m^{rp+1-r})\}.
\end{array}
\]
To obtain the claim it is thus sufficient to show that, for every
$m\in\left[
0,1\right]  $,%
\[
G(m)\equiv
q(m^{q-r+1}+1)(1-m^{rp})-(q-r+1)(1-m^{q})(1+m^{rp+1-r})\geq0.
\]
$\,$Observe first that $G(0)\geq G(1)=0$. Define, for
$\alpha\geq0$,
\[
H(\alpha,m)=q(m^{\alpha-r+1}+1)(1-m^{rp})-(q-r+1)(1-m^{\alpha})(1+m^{rp+1-r}%
).
\]
Note that $H(q,m)=G(m)$. Moreover if $\alpha\geq\beta\geq0$, then
$H(\beta,m)\geq H(\alpha,m)$. If we can show that
$H(rp+r-1,m)\geq0$ we would obtain
\[
G(m)=H(q,m)\geq H(rp+r-1,m)\geq0
\]
as claimed. It therefore remains to show that, for every
$m\in\left[
0,1\right]  $,%
\begin{align*}
\widetilde{H}\left(  m\right)   &  \equiv H(rp+r-1,m)\\
&  =\left(  r-1\right)
(1-m^{2rp})+(q-r+1)(m^{rp+r-1}-m^{rp+1-r})\geq0.
\end{align*}
This is proved by observing that $\widetilde{H}\left(  0\right)
=r-1\geq\widetilde{H}\left(  1\right)  =0$ and that
$\widetilde{H}^{\prime
}\left(  m\right)  \leq0$. To prove this last inequality we first observe that%
\[
\widetilde{H}^{\prime}\left(  m\right) =-m^{rp+r-2}\varphi\left(
m\right)
\]
where%
\[
\varphi\left(  m\right)  =2rp\left(r\!-\!1\right)
m^{rp-r\!+\!1}+\left( q\!-\!r\!+\!1\right)  \left(
rp-\!r\!+\!1\right) m^{-2r+2}-\left( q\!-\!r\!+\!1\right)  \left(
rp+\!r\!-\!1\right)  .
\]
We next see that $\lim_{m \to 0}\varphi\left( m\right)= + \infty$
and from the hypothesis
of Case 2%
\[
\varphi\left(  1\right)  =2\left(  r-1\right)  \left(
rp+r-1-q\right) \geq0.
\]
To conclude to $\varphi\left(  m\right)  \geq0$\ for every
$m\in\left[ 0,1\right]  $, we observe that at a point
$\overline{m}$ where $\varphi ^{\prime}\left( \overline{m}\right)
=0$, one has $\varphi\left( \overline{m}\right)  \geq0$\ and this
concludes the proof.

\begin{remark}
The previous study of the inequality $G(m)\geq 0$ in $[0,1]$ is
optimal: in fact, we noticed that $G(0)\geq G(1)=0$; therefore,
$G(m)\geq 0$ implies necessarily $G'(1)\leq 0$. After a simple
computation we have $G'(1)\leq 0$ if and only if $q \leq rp+r-1,$
the same estimation we found in the proof.
\end{remark}

\textit{Step 3.} We will now prove that $F^{\prime}(1)>0$ if and
only if $q>(2r-1)p$. Since $F(1)=0$ and $F(0)>0$ this will show,
as wished, that there exists $m_{0}\in(0,1)$ such that
$F(m_{0})=0$ and that $F<0$\ for $m$\ close
to $1$ ($m<1$). A direct computation shows that%
\[
F^{\prime}(1)=\frac{q}{p}\int_{0}^{1}\frac{t^{2r-2}-t^{r-1+q}}{(1-t^{q}%
)^{1+\frac{1}{p}}}dt-r\int_{0}^{1}\frac{t^{r-1}}{(1-t^{q})^{\frac{1}{p}}%
}dt\,.
\]
Changing the variable $s=t^{q}$\ we get%
\begin{align*}
F^{\prime}(1)  &  =\frac{1}{p}\int_{0}^{1}\frac{s^{\frac{2r-1}{q}-1}%
-s^{\frac{r}{q}}}{(1-s)^{1+\frac{1}{p}}}ds-\frac{r}{q}\int_{0}^{1}%
\frac{s^{\frac{r}{q}-1}}{(1-s)^{\frac{1}{p}}}ds\\
&  =\frac{1}{p}\int_{0}^{1}\frac{s^{\frac{2r-1}{q}-1}-s^{\frac{2r-1}{q}}%
}{(1-s)^{1+\frac{1}{p}}}ds+\frac{1}{p}\int_{0}^{1}\frac{s^{\frac{2r-1}{q}%
}-s^{\frac{r}{q}}}{(1-s)^{1+\frac{1}{p}}}ds-\frac{r}{q}\int_{0}^{1}%
\frac{s^{\frac{r}{q}-1}}{(1-s)^{\frac{1}{p}}}ds\;.
\end{align*}
Note that the first expression is readily given as%
\[
\frac{1}{p}\int_{0}^{1}\frac{s^{\frac{2r-1}{q}-1}-s^{\frac{2r-1}{q}}%
}{(1-s)^{1+\frac{1}{p}}}ds=\frac{1}{p}\int_{0}^{1}\frac{s^{\frac{2r-1}{q}-1}%
}{(1-s)^{\frac{1}{p}}}ds=\frac{1}{p}B\left(  \frac{2r-1}{q},\frac{1}%
{p^{\prime}}\right)  .
\]
Integrating by parts the second term in $F^{\prime}\left( 1\right)
$\ and
applying L'H\^{o}pital's rule we obtain%
\begin{align*}
\frac{1}{p}\int_{0}^{1}\frac{s^{\frac{2r-1}{q}}-s^{\frac{r}{q}}}%
{(1-s)^{1+\frac{1}{p}}}ds  &  =\left.  \left[  \frac{s^{\frac{2r-1}{q}%
}-s^{\frac{r}{q}}}{(1-s)^{\frac{1}{p}}}\right]  \right|  _{0}^{1}-\int_{0}%
^{1}\frac{\frac{2r-1}{q}s^{\frac{2r-1}{q}-1}-\frac{r}{q}s^{\frac{r}{q}-1}%
}{(1-s)^{\frac{1}{p}}}ds\\
&
=-\frac{2r-1}{q}\int_{0}^{1}\frac{s^{\frac{2r-1}{q}-1}}{(1-s)^{\frac{1}{p}%
}}ds+\frac{r}{q}\int_{0}^{1}\frac{s^{\frac{r}{q}-1}}{(1-s)^{\frac{1}{p}}}ds\\
&  =-\frac{2r-1}{q}B\left(
\frac{2r-1}{q},\frac{1}{p^{\prime}}\right)
+\frac{r}{q}\int_{0}^{1}\frac{s^{\frac{r}{q}-1}}{(1-s)^{\frac{1}{p}}}ds\;.
\end{align*}
Combining these results we have%
\[
F^{\prime}\left(  1\right)  =\left(
\frac{1}{p}-\frac{2r-1}{q}\right) B\left(
\frac{2r-1}{q},\frac{1}{p^{\prime}}\right)
\]
which leads to the assertion.
\end{proof}

\begin{lemma}
\label{Etude de K}Let $q>\left(  2r-1\right)  p$, then there exists $m_{0}%
\in\left(  0,1\right)  $ (i.e. $m_{0}<1$) such that $\inf\left\{
K\left( m\right)  :m\in\left(  0,1\right]  \text{ and }F\left(
m\right)  =0\right\} =K\left(  m_{0}\right)  <K\left(  1\right) $.
\end{lemma}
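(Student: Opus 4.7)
The plan is to combine Lemma~\ref{Etude de F}(iii) with the variational identity of Lemma 2.1. Writing $\beta:=\inf\{K(m):m\in(0,1],\,F(m)=0\}$, I would first verify that $\beta$ is attained. The constraint set $S=\{F=0\}\cap(0,1]$ is closed and nonempty: it contains $1$ by Lemma~\ref{Etude de F}(i), and some $\tilde m\in(0,1)$ under the hypothesis by Lemma~\ref{Etude de F}(iii). Minimizing sequences cannot escape to $m\to 0^+$, since $K(m)\to+\infty$ there: in the defining formula the prefactor $\left[\frac{q(p-1)+p}{2p(1-r(m))}\right]^{(p'+q)/(p'q)}$ blows up as $r(m)\to 1$, while the integral stays bounded (its domain shrinks to $[0,1]$ and the integrand converges pointwise to $[z^{r-1}-z^q]^{1/p'}$, which is integrable for $q\ge r-1$). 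Continuity of $K$ and compactness then produce a minimizer $m_0\in S$.

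Next I would exhibit a concrete zero $\bar m\in(0,1)$ of $F$. By Lemma~\ref{Etude de F}(iii), $F(1)=0$ and $F'(1)>0$, so $F<0$ just to the left of $1$; combined with $F(0^+)>0$, the intermediate value theorem delivers a largest zero $\bar m\in(0,1)$. This already yields $\beta\le K(\bar m)$, so the remaining task is the strict inequality $K(\bar m)<K(1)$.

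The main obstacle is precisely this strict comparison. My approach is a second-variation argument. By Remark~\ref{referee}, the value $K(1)$ is attained by the odd solution of the Euler-Lagrange equation with Lagrange multiplier $\mu=0$; since any odd function satisfies $\int|u|^{r-2}u=0$ automatically, this odd critical point is admissible and critical for the constrained Rayleigh quotient defining $\alpha(p,q,r)$. Under $q>(2r-1)p$, it must fail to be a local minimum in the admissible class $\{u:\int|u|^{r-2}u=0\}$: the condition $q>(2r-1)p$ should emerge as the sign condition of the Hessian of the constrained Lagrangian at the odd critical point, and the positivity $F'(1)>0$ established in Step~3 of Lemma~\ref{Etude de F} is precisely the one-dimensional geometric manifestation of this failure. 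A careful Hessian computation exhibiting an admissible perturbation that strictly decreases the Rayleigh quotient then forces $\beta<K(1)$, hence $m_0\ne 1$ and $K(m_0)\le K(\bar m)<K(1)$. The technical heart of the argument is this second-variation computation, whose algebra should reduce to, and recover, the $F'(1)>0$ calculation already carried out in the previous lemma.
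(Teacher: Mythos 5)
There is a genuine gap: the entire content of the lemma is the strict inequality $K(m_0)<K(1)$, and for that step you offer only a plan, not a proof. Your first two paragraphs (attainment of the infimum via $K(m)\rightarrow+\infty$ as $m\rightarrow0^{+}$, and the existence of a zero $\bar m\in(0,1)$ of $F$ from Lemma \ref{Etude de F}(iii)) are fine but do not touch the difficulty, since nothing a priori prevents $K(\bar m)\geq K(1)$. The proposed second-variation argument at the odd critical point is left entirely as a conjecture (``should emerge as the sign condition of the Hessian'', ``should reduce to the $F'(1)>0$ calculation''); to make it work you would have to construct an explicit perturbation tangent to the nonlinear constraint $\int_{-1}^{1}|u|^{r-2}u=0$, justify twice-differentiability of $t\mapsto\|u+t\varphi\|_q^p$ and of the constraint map along it, compute the sign of the resulting quadratic form, and show that exactly the condition $q>(2r-1)p$ makes it negative in some admissible direction. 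None of this is carried out, and it is precisely the technical heart of the matter.

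The paper takes a different and much more elementary route that bypasses the second variation entirely: it computes $K'$ in closed form and discovers the identity
\[
K^{\prime}(m)=\frac{c(p,q)}{p^{\prime}}\,r^{\prime}(m)\left(1-r(m)\right)^{\frac{1}{p}-\frac{1}{q}-2}\left(1-\frac{r-1}{q}\,r(m)\right)F(m),
\]
with $r^{\prime}(m)<0$ and the other factors positive. Since $F<0$ just to the left of $m=1$ (this is where $F^{\prime}(1)>0$ enters), $K^{\prime}>0$ there, so $m=1$ is a strict local maximum of $K$; combined with $K(m)\rightarrow+\infty$ as $m\rightarrow0^{+}$, the global minimum of $K$ over $(0,1]$ is attained at an interior point $m_0$, where $K^{\prime}(m_0)=0$ forces $F(m_0)=0$. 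Thus the unconstrained and constrained minima coincide and lie strictly below $K(1)$. If you want to salvage your approach, the missing computation is exactly this formula for $K^{\prime}$; without it (or a fully executed Hessian computation), the proof is incomplete.
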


\begin{proof}
\textit{Step 1.} As already mentioned it is easy to see that the
minimum is attained and we therefore wish to show that $m_{0}<1$
and $K\left( m_{0}\right)  <K\left(  1\right)  $. To this aim we
first observe that $K\in C^{1}\left(  \left(  0,1\right] \right)
$\ and that $\lim \limits_{m\rightarrow0}K(m)=+\infty$. We will
then prove, in the next step,
that there exists a constant $c(p,q)>0$ such that%
\[
K^{\prime}(m)=\frac{c\left(  p,q\right)
}{p^{\prime}}r^{\prime}\left(
m\right)  \left(  1-r\left(  m\right)  \right)  ^{\frac{1}{p}-\frac{1}{q}%
-2}\left(  1-\frac{r-1}{q}r\left(  m\right)  \right)  F\left(
m\right)  \,.
\]
Recall that $r^{\prime}(m)\,<0$\ for every $m\in\left( 0,1\right]
$. Since $F<0$\ for $m$\ close to $1$ ($m<1$) (by Lemma \ref{Etude
de F}), we deduce that $m=1$\ is a local maximum of $K$ in $\left(
0,1\right]  $. Therefore the global minimum of $K$ in $\left(
0,1\right]  $\ is at a point $m_{0}\in(0,1)$ where $F(m_{0})=0$.
This is the claimed result.

\textit{Step 2.} We now compute $K^{\prime}(m)$. Recall that%
\begin{align*}
K\left(  m\right)   &  =c\left(  p,q\right)  \left(  1-r\left(
m\right) \right)
^{-\frac{1}{p^{\prime}}-\frac{1}{q}}\int_{-m}^{1}\left[  1-r\left(
m\right)  +r\left(  m\right)  \left|  z\right|  ^{r-2}z-\left|
z\right|
^{q}\right]  ^{\frac{1}{p^{\prime}}}dz\\
&  =c\left(  p,q\right)  \left(  1-r\left(  m\right)  \right)
^{-\frac{1}{q}%
}\\
&  \int_{-m}^{1}\left[  1+r\left(  m\right)  \left(  1-r\left(
m\right) \right)  ^{-1}\left|  z\right|  ^{r-2}z-\left( 1-r\left(
m\right)  \right) ^{-1}\left|  z\right|  ^{q}\right]
^{\frac{1}{p^{\prime}}}dz
\end{align*}
where%
\[
c\left(  p,q\right)  =2\left(  \frac{p^{\prime}}{q}\right) ^{\frac
{1}{p^{\prime}}}\left[  \frac{q(p-1)+p}{2p}\right]  ^{\frac{p^{\prime}%
+q}{p^{\prime}q}}\,.
\]
Writing $z=\left(  1-r\left(  m\right)  \right)  ^{\frac{1}{q}}t$,
$\alpha\left(  m\right)  =-m\left(  1-r\left(  m\right)  \right)
^{-\frac {1}{q}}$, $\beta\left(  m\right)  =\left(  1-r\left(
m\right)  \right) ^{-\frac{1}{q}}$\ and $\gamma\left(  m\right)
=r\left(  m\right)  \left(
1-r\left(  m\right)  \right)  ^{-1+\frac{r-1}{q}}$\ we obtain%
\[
K\left(  m\right)  =c\left(  p,q\right)  \int_{\alpha\left(
m\right) }^{\beta\left(  m\right)  }\left[  1+\gamma\left(
m\right)  \left|  t\right| ^{r-2}t-\left|  t\right|  ^{q}\right]
^{\frac{1}{p^{\prime}}}dt\,.
\]
Noting that%
\[
1+\gamma\left(  m\right)  \left|  \beta\left(  m\right)  \right|
^{r-1}-\left|  \beta\left(  m\right)  \right| ^{q}=1-\gamma\left(
m\right) \left|  \alpha\left(  m\right) \right|  ^{r-1}-\left|
\alpha\left( m\right)  \right|  ^{q}=0
\]
we obtain%
\[
K^{\prime}\left(  m\right)  =\frac{c\left(  p,q\right)  }{p^{\prime}}%
\gamma^{\prime}\left(  m\right)
{\displaystyle\int_{\alpha\left(  m\right)  }^{\beta\left(
m\right)  }}
\frac{\left|  t\right|  ^{r-2}t}{\left[  1+\gamma\left( m\right)
\left| t\right|  ^{r-2}t-\left|  t\right|  ^{q}\right]
^{\frac{1}{p}}}dt\,.
\]
Performing backward the change of variable $t=\left(  1-r\left(
m\right)
\right)  ^{-\frac{1}{q}}z$\ we get%
\begin{align*}
K^{\prime}\left(  m\right)   &  =\frac{c\left(  p,q\right)  }{p^{\prime}%
}\gamma^{\prime}\left(  m\right)  \left(  1-r\left(  m\right)
\right)
^{-\frac{r}{q}}\\
&  \int_{-m}^{1}\frac{\left|  z\right|  ^{r-2}z\,dz}{\left[
1+r\left( m\right)  \left(  1-r\left(  m\right)  \right)
^{-1}\left|  z\right| ^{r-2}z-\left(  1-r\left(  m\right) \right)
^{-1}\left|  z\right|
^{q}\right]  ^{\frac{1}{p}}}\\
&  =\frac{c\left(  p,q\right) }{p^{\prime}}\gamma^{\prime}\left(
m\right) \left(  1-r\left( m\right)  \right)
^{\frac{1}{p}-\frac{r}{q}}F\left(
m\right) \\
&  =\frac{c\left(  p,q\right)  }{p^{\prime}}r^{\prime}\left(
m\right) \left(  1-r\left(  m\right)  \right)
^{\frac{1}{p}-\frac{1}{q}-2}\left( 1-\frac{r-1}{q}r\left( m\right)
\right)  F\left(  m\right)
\end{align*}
as wished.
\end{proof}

\vspace{0.2cm}

We are now in a position to conclude the proof of the main
theorem.

\vspace{0.2cm} \noindent
\begin{proof}
(Theorem \ref{Theoreme principal}). \textit{Step 1}. If $q\leq
rp+r-1$, then, since $F(m)=0$ if and only if $m=1$, we deduce
(recalling \ref{referee}) that
\[
\alpha(p,q,r)=\alpha(p,q,q)=K(1)=2\left(
\frac{p^{\prime}}{q}\right) ^{\frac{1}{p^{\prime}}}\left[
\frac{q(p-1)+p}{2p}\right]  ^{\frac{p^{\prime
}+q}{p^{\prime}q}}\int_{-1}^{1}\left[  1-\left|  z\right|
^{q}\right] ^{\frac{1}{p^{\prime}}}dz
\]
which easily leads to the value given in the theorem.

\noindent If $q>(2r-1)p$, we have, as claimed, that%
\[
\alpha(p,q,q)=K(1)>\inf\{K(m):\;m\in(0,1],\;F(m)=0\}=\alpha(p,q,r).
\]
It remains to discuss the limit cases.

\textit{{Step 2. }}The case $q=r\geq2$\ is part of the previous
analysis. If, however $1<q=r<2$, then the result still holds and
we refer to \cite{Dacorogna-Gangbo-Subia} for more details.

\textit{{Step 3. }}We now discuss the value of $\alpha(p,1,2)$. Let%
\[
\mathcal{X}_{2}=\left\{  v\in
W_{per}^{1,p}(-1,1),\;\int_{-1}^{1}v=0\right\} .
\]
We have just seen that for every $q>1$ then, using also H\"{o}lder
inequality,%
\[
\left\|  v^{\prime}\right\|  _{p}\geq\alpha(p,q,2)\left\|  v\right\|  _{q}%
\geq\alpha(p,q,2)2^{-\frac{1}{q^{\prime}}}\left\|  v\right\|
_{1}\;,\;\forall
v\in\mathcal{X}_{2}%
\]
and hence denoting by%
\[
\bar{\alpha}=\lim\limits_{q\rightarrow1}\alpha(p,q,2)=2^{\frac{1}{p}}%
\frac{{(p^{\prime}+1)}^{\frac{1}{p^{\prime}}}}{p^{\prime}}B\left(
\frac
{1}{p^{\prime}},1\right)  =2^{\frac{1}{p}}{(p^{\prime}+1)}^{\frac{1}%
{p^{\prime}}}%
\]
we get%
\[
\left\|  v^{\prime}\right\|  _{p}\geq\bar{\alpha}\left\| v\right\|
_{1}\;,\;\forall v\in\mathcal{X}_{2}.
\]
We therefore have just obtained that
$\alpha(p,1,2)\geq\overline{\alpha}$. We now prove the reverse
inequality. Let for $q>1$, $u_{q}\in\mathcal{X}_{2}$ be a
minimizer, i.e.
\[
\frac{\left\|  u_{q}^{\prime}\right\|  _{p}}{\left\|  u_{q}\right\|  _{q}%
}=\alpha(p,q,2)\,.
\]
Recall that with our conventions $-1\leq-m\leq u_{q}\left(
x\right)  \leq1$ and hence $\left\|  u_{q}\right\|
_{\infty}\leq1$. From the integrated Euler-Lagrange equation
(\ref{E-L integree avec mu et c (equivalent)}) we then get
$\left\|  u_{q}^{\prime}\right\|  _{\infty}\leq C(p)$. Therefore
there exists $\bar{u}$ and a subsequence, still denoted by
$u_{q}$, such that $u_{q}\overset{\ast}{\rightharpoonup}\bar{u}$
in $W^{1,\infty}$ and $u_{q}\rightarrow\bar{u}$ in $L^{\infty}$.
This implies by weak lower
semicontinuity%
\[
\left\|  \overline{u}^{\prime}\right\|  _{p}\leq\underset{q\rightarrow1}%
{\lim\inf}\left\|  u_{q}^{\prime}\right\|  _{p}\,.
\]
Moreover%
\begin{align*}
\left|  \left\|  u_{q}\right\|  _{q}-\left\|  \overline{u}\right\|
_{1}\right|   &  \leq\left\|  u_{q}-\overline{u}\right\|
_{q}+\left| \left\|  \overline{u}\right\|  _{q}-\left\|
\overline{u}\right\|  _{1}\right|
\\
&  \leq2^{\frac{1}{q}}\left\|  u_{q}-\overline{u}\right\|
_{\infty}+\left| \left\|  \overline{u}\right\|  _{q}-\left\|
\overline{u}\right\| _{1}\right|
\end{align*}
and, since $\lim_{q\rightarrow1}\left\|  \overline{u}\right\|
_{q}=\left\|
\overline{u}\right\|  _{1}$, we get%
\[
\lim_{q\rightarrow1}\left\|  u_{q}\right\|  _{q}=\left\| \overline
{u}\right\|  _{1}\,.
\]
Combining these facts we have the claim, namely%
\[
\alpha(p,1,2)\leq\frac{\left\|  \bar{u}^{\prime}\right\|
_{p}}{\left\| \bar{u}\right\|
_{1}}\leq\underset{q\rightarrow1}{\lim\inf}\frac{\left\|
u_{q}^{\prime}\right\|  _{p}}{\left\|  u_{q}\right\|  _{q}}=\lim
_{q\rightarrow1}\alpha(p,q,2)=\bar{\alpha}.
\]

\textit{Step 4.} We now compute $\alpha(\infty,q,r)$. We let%
\[
\mathcal{X}_{r}=\left\{  v\in W_{per}^{1,p}(-1,1),\;\int_{-1}^{1}%
|v|^{r-2}v=0\right\}  .
\]
As above we have%
\[
2^{\frac{1}{p}}\left\|  v^{\prime}\right\|  _{\infty}\geq\left\|
v^{\prime }\right\|  _{p}\geq\alpha(p,q,r)\left\|  v\right\|
_{q}\;,\;\forall
v\in\mathcal{X}_{r}%
\]
and hence, if we denote by%
\[
\bar{\alpha}=\lim\limits_{p\rightarrow\infty}\alpha(p,q,r)=\lim
\limits_{p\rightarrow\infty}\alpha(p,q,q)=\frac{2}{q}\left(  \frac{q+1}%
{2}\right)  ^{\frac{1}{q}}B\left(  1,\frac{1}{q}\right)  =2^{\frac
{1}{q^{\prime}}}\,\left(  q+1\right)  ^{\frac{1}{q}},
\]
we obtain%
\[
\left\|  v^{\prime}\right\|  _{\infty}\geq\bar{\alpha}\left\|
v\right\| _{q}\;,\;\forall v\in\mathcal{X}_{r}.
\]
We therefore proved that
$\alpha(\infty,q,r)\geq\overline{\alpha}$. Let us now show the
reverse inequality. For $p>1$, let $u_{p}\in\mathcal{X}_{r}$ be a
minimizer, i.e.%
\[
\frac{\left\|  u_{p}^{\prime}\right\|  _{p}}{\left\|  u_{p}\right\|  _{q}%
}=\alpha(p,q,r)\,.
\]
Recall that since we assumed $-1\leq-m\leq u_{p}\left(  x\right)
\leq1$ we have $\left\|  u_{p}\right\|  _{\infty}\leq1$. Choosing
$p$\ sufficiently large so that $rp+r-1\geq q$, we can rewrite
(\ref{E-L integree avec mu et c (equivalent)}) as (recalling that
we are then in the case where $m=1$\ and
thus $r\left(  m\right)  =0$)%
\[
|u_{p}^{\prime}|=\alpha(p,q,r)\left\|  u_{p}\right\|  _{q}\left(
\frac{p^{\prime}}{q}\right)  ^{\frac{1}{p}}\left\|  u_{p}\right\|
_{q}^{-\frac{q}{p}}[1-|u_{p}|^{q}]^{\frac{1}{p}},
\]
which implies%
\[
\frac{\left\|  u_{p}^{\prime}\right\|  _{\infty}}{\left\|
u_{p}\right\| _{q}}=\alpha(p,q,r)\left(
\frac{p^{\prime}}{q}\right)  ^{\frac{1}{p}}\left\| u_{p}\right\|
_{q}^{-\frac{q}{p}}\,.
\]
Note that by (\ref{normau}) we have%
\[
\left\|  u_{p}\right\|
_{q}\underset{p\rightarrow\infty}{\longrightarrow }\left(
\frac{2}{q+1}\right)  ^{\frac{1}{q}}\,.
\]
By definition of $\alpha(\infty,q,r)$ we therefore get
\[
\alpha(\infty,q,r)\leq\alpha(p,q,r)\left(
\frac{p^{\prime}}{q}\right) ^{\frac{1}{p}}\left\|  u_{p}\right\|
_{q}^{-\frac{q}{p}}\underset
{p\rightarrow\infty}{\longrightarrow}\bar{\alpha}%
\]
which is the desired inequality.
\end{proof}

\bigskip

\noindent {\bf Acknowledgement. } We thank B. Kawohl for helpful
discussions.

 \medskip
Received August 2002; revised January 2003.

\medskip

\end{document}